\documentclass[11pt,a4paper]{amsart}
\usepackage[T1]{fontenc}
\usepackage{lmodern, color}
\usepackage{amsmath,amssymb}
\usepackage[margin=30mm]{geometry}
\usepackage{microtype,booktabs}
\numberwithin{equation}{section}
\newtheorem{theorem}{Theorem}[section]
\newtheorem{lemma}[theorem]{Lemma}

\definecolor{DarkBlue}{rgb}{0,0.1,0.5}
\usepackage[
colorlinks=true,
linkcolor=DarkBlue,
citecolor=DarkBlue,
urlcolor=DarkBlue 
]{hyperref}

\newcommand{\R}{\mathbb R}

\newcommand{\Sphere}{\mathbb S}

\newcommand{\Ric}{\operatorname{Ric}}
\newcommand{\dd}{\,\mathrm d}
\DeclareMathOperator{\Span}{span}

\newcommand\nb{\nabla}
\newcommand{\beq}{\begin{equation} \begin{split}}
		\newcommand{\eeq}{\end{split} \end{equation}}

\makeatletter
\def\section{\@startsection{section}{1}\z@{.9\linespacing\@plus\linespacing}%
	{.7\linespacing} {\fontsize{13}{14}\selectfont\bfseries\centering}}
\def\paragraph{\@startsection{paragraph}{4}%
	\z@{0.3em}{-.5em}%
	{$\bullet$ \ \normalfont\itshape}}

\@addtoreset{equation}{section}
\makeatother

\renewcommand\and{\qquad\text{and}\qquad}

\newcommand\sm{\setminus}

\newcommand\Lm{\Lambda}

\newcommand{\comm}[1]{}

\def\bm1{\mathbbm{1}}

\def\arr{\rightarrow}

\def\tt{\theta}

\def\lm{\lambda}

\def\dd{{\,\mathrm{d}}}

\newcounter{counter_a}

\usepackage[latin1]{inputenc}
\usepackage[T1]{fontenc}

\newcommand{\ie}{{\it i.e.}\,}
\newcommand{\cf}{{\it cf.}\,}

\numberwithin{figure}{section}
\numberwithin{equation}{section}
\theoremstyle{plain}
\newtheorem*{thm*}{Theorem}

\theoremstyle{remark}

\theoremstyle{plain}

\newcommand{\beu}{\begin{equation*}}
	\newcommand{\eeu}{\end{equation*}}
\newcommand{\besu}{\begin{equation*}
		\begin{aligned}}
		\newcommand{\eesu}{\end{aligned}
\end{equation*}}
\newcommand{\bes}{\begin{equation}
		\begin{aligned}}
		\newcommand{\ees}{\end{aligned}
\end{equation}}

\newcommand\eps{\varepsilon}

\newcommand\void[1]{}

\def\eps{\varepsilon}

   \def\dH{{\mathbb H}}   
      
   \def\dN{{\mathbb N}}   
      \def\dR{{\mathbb R}}
\def\dS{{\mathbb S}}

   \def\sfE{{\mathsf E}}

      \def\sfR{{\mathsf R}}

\newcommand{\Hm}[1]{\leavevmode{\marginpar{\tiny%
			$\hbox to 0mm{\hspace*{-0.5mm}$\leftarrow$\hss}%
			\vcenter{\vrule depth 0.1mm height 0.1mm width \the\marginparwidth}%
			\hbox to
			0mm{\hss$\rightarrow$\hspace*{-0.5mm}}$\\
			\relax\raggedright #1}}}

\title{A note on the failure of Payne's inequality on manifolds with boundary}

\author[D.\ Buoso]{Davide Buoso }
\address{(D.\ Buoso) Dipartimento per lo Sviluppo Sostenibile e la Transizione Ecologica, Universit\`a degli Studi del Piemonte Orientale ``A.\ Avogadro'', Piazza Sant'Eusebio 5, 13100 Vercelli, Italy}
\email{davide.buoso@uniupo.it}

\author[V.\ Lotoreichik]{Vladimir Lotoreichik}
\address{(V.\ Lotoreichik) Department of Mathematics, Faculty of Nuclear Sciences and Physical Engineering, Czech Technical University in Prague, Trojanova 13, 120 00, Prague, Czech Republic}
\email{vladimir.lotoreichik@fjfi.cvut.cz}

\subjclass[2020]{35P15, 58J50, 53C21}
\keywords{Buckling eigenvalues, Dirichlet spectrum, Payne's conjecture, counterexamples}

\begin{document}
\begin{abstract}
We show that Payne's conjecture on the comparison between Dirichlet Laplacian eigenvalues and buckling eigenvalues does not hold on flat and
hyperbolic cylindrical surfaces and on (round) hemispheres.
\end{abstract}
\maketitle
\enlargethispage{3pt}

\section{Introduction}\label{sec:introduction}

Let $\Omega$ be a bounded domain with smooth non-empty boundary $\partial\Omega$  
in a complete smooth $d$-dimensional Riemannian manifold with $d\ge 2$. The domain $\Omega$ is equipped
with the metric $g$ inherited from the ambient manifold.  Denote by
$\lambda_k(\Omega)$ the Dirichlet eigenvalues of the positive
Laplace--Beltrami operator on $\Omega$ and by $\Lambda_k(\Omega)$ the clamped
buckling eigenvalues, both ordered increasingly and counted
with multiplicity.  We will address the validity of the inequality
\begin{equation*}
  \Lambda_k(\Omega)\geq\lambda_{k+1}(\Omega),\qquad k\in\mathbb{N},
  \tag{GP}\label{eq:GP}
\end{equation*}
which is known as (generalized) Payne's conjecture.
Payne \cite{Payne} proved the inequality for $k=1$ in the case of bounded planar Euclidean domains; a small gap in his
argument was later closed by Friedlander \cite{Friedlander2004},
who also showed that the stronger comparison $\Lambda_1\geq\lambda_3$
fails in general even in the case of planar Euclidean domains. Liu's proposed all-index proof \cite{Liu} was
withdrawn after an error was identified by Friedlander
\cite{Friedlander2021}. Levitin and Polterovich \cite{LevitinPolterovich}
recently proved \eqref{eq:GP} for every odd index $k$ on bounded planar domains and on certain classes of flat manifolds;
their result also covers an even index $k$ when
$\Lambda_k=\Lambda_{k+1}$, however the general even-index case remains open. 
On the other hand, Buoso, Luzzini, Provenzano, and Stubbe \cite{BLPS} showed that \eqref{eq:GP} 
fails at every even index in dimension one. Note that Payne's conjecture \eqref{eq:GP} sparkled a 
wide range of interest for comparison between eigenvalues of related yet different operators; 
see \cite{ACNT,BuosoFreitas,ElChamiEtAl,IliasShouman, Kelliher, Lotoreichik, Provenzano} 
and the references therein.

In this paper we provide some counterexamples to \eqref{eq:GP} on curved and flat
manifolds. In particular, we show that a two-dimensional flat cylindrical surface with a sufficiently short periodic direction
retains the one-dimensional failure at $k=2$. Additionally, by means of perturbation and scaling we are able to show that the two-dimensional hyperbolic cylindrical surface of every prescribed negative constant Gaussian curvature also provides a counterexample to \eqref{eq:GP}.
Moreover, by means of constructing a $(d+1)$-dimensional clamped trial space on the hemisphere $\Sphere^d_+$, we can show that
\begin{equation}
 \Lambda_{d+1}(\Sphere^d_+)
 \leq \frac{19d^2+56d+48}{7d+8}
 <3(d+2)=\lambda_{d+2}(\Sphere^d_+),\qquad d\geq2.
 \label{eq:intro-hemisphere}
\end{equation}
We remark that domains on the sphere also serve as counterexamples~\cite{Mazzeo} to the validity of Friedlander-Filonov inequality (\cf~\cite[\S 3.2.4]{LevitinMangoubiPolterovich2023}) on manifolds with boundary.

We also note that we prompted AI to conduct numerical simulations of \eqref{eq:GP} for bounded domains in $\mathbb R^d$ and $\mathbb H^d$ for $d=2,3,4,5$; the prompt given was specifically designed to search for possible counterexamples to \eqref{eq:GP}. However, after refining the numerical scheme, no counterexample was reported in those spaces. 

The paper is organized as follows: in Section~\ref{sec:framework} we provide the variational setup, and Section~\ref{sec:counterexamples} is devoted to the counterexamples to inequality \eqref{eq:GP}.


\section{Variational framework}\label{sec:framework}
Throughout the present paper, all function spaces consist of real-valued functions.
We will denote by $-\Delta_g$ the positive Laplace-Beltrami operator on $\Omega$ equipped with the metric $g$ inherited from the ambient Riemannian manifold. We recall the Dirichlet eigenvalue problem for $-\Delta_g$ on a domain $\Omega$
$$
\begin{cases}
-\Delta_g u=\lambda u, & \text{in\ }\Omega,\\
u=0, & \text{on\ }\partial\Omega,
\end{cases}
$$
which has a discrete set of positive eigenvalues $\{\lambda_k\}_{k\in\mathbb N}$, which in turn can be variationally characterized as
\begin{equation}
 \lambda_k=\min_{\substack{E\subset H^1_0(\Omega)\\ \dim E=k}}
 \max_{0\neq u\in E}\frac{\displaystyle\int_\Omega\langle\nabla u,\nabla u\rangle_g\dd V_g}{\displaystyle\int_\Omega u^2\dd V_g},
 \label{eq:dirichlet-minmax}
\end{equation}
where $\langle\cdot,\cdot\rangle_g$ stands for the inner product in the tangent space with respect to the metric $g$.
Similarly, we recall the buckling eigenvalue problem on a domain $\Omega$
\begin{equation}\label{bev}
\begin{cases}
\Delta_g^2 U=-\Lambda \Delta_g U, & \text{in\ }\Omega,\\
U=\partial_{\nu} U=0, & \text{on\ }\partial\Omega,
\end{cases}
\end{equation}
where $\partial_\nu U$ stands for the normal derivative of $U$.
This spectral problem has a discrete set of positive eigenvalues $\{\Lambda_k\}_{k\in\mathbb N}$, which in turn can be variationally characterized as
\begin{equation}
 \Lambda_k=\min_{\substack{E\subset H^2_0(\Omega)\\ \dim E=k}}
 \max_{0\neq u\in E}
 \frac{\displaystyle\int_\Omega(\Delta_g u)^2\dd V_g}{
 	\displaystyle\int_\Omega\langle\nabla u,\nabla u\rangle_g\dd V_g}. \label{eq:buckling-minmax}
\end{equation}
Here and occasionally in the following, we use the shorthand notation $\lambda_k$ and $\Lambda_k$ instead of $\lambda_k(\Omega)$ and $\Lambda_k(\Omega)$, respectively.

It is nowadays classical to show that $\Lambda_k\ge \lambda_k$ for any domain of finite measure (see e.g., \cite[Theorem 2.2]{BuosoFreitas}). If one supposes, in addition, that $\Omega$ is oriented and is bounded with a boundary of class $C^2$, then it is possible to show that in fact $\Lambda_k> \lambda_k$ (see e.g., \cite[Theorem 1.1]{Liu2011}).

We also observe that provided variational formulation for \eqref{bev} is essentially the only one in Euclidean spaces, but as soon as the curvature is non-zero then substantially different formulations may arise, since the so-called Hessian energy may differ from the Laplacian energy due to the Reilly identity~\cite[Theorem 1]{Reilly} (see also~\cite[Eq. 3.24]{ColboisProvenzano2022}) for $u\in H^2_0(\Omega)$
\begin{equation}
 \int_\Omega|D^2u|^2\dd V_g
 =\int_\Omega(\Delta_g u)^2\dd V_g
 -\int_\Omega\Ric(\nabla u,\nabla u)\dd V_g,
 \label{eq:reilly}
\end{equation}
where $D^2u$ is the Hessian of $u$, $|D^2 u|$ is its Hilbert-Schmidt norm, and $\Ric(\cdot,\cdot)$ is the Ricci tensor on $\Omega$.
For constant sectional curvature $\kappa$, $\Ric=(d-1)\kappa g$,
so min--max principle gives the exact shift
\begin{equation}
 \Lambda_k^{\mathrm{Hess}}=\Lambda_k-(d-1)\kappa,
 \label{eq:hessian-shift}
\end{equation}
where
$$
\Lambda_k^{\mathrm{Hess}}=\min_{\substack{E\subset H^2_0(\Omega)\\ \dim E=k}}
 \max_{0\neq u\in E}\frac{\displaystyle \int_\Omega|D^2u|^2\dd V_g}{\displaystyle\int_\Omega\langle\nabla u,\nabla u\rangle_g\dd V_g},
$$
Moreover, we remark that both $\lambda_k$ and $\Lambda_k$  have the same scaling law: given a constant metric rescaling $\widehat g=cg$ with $c >0$ on a fixed domain $\Omega$, we have
\begin{equation}
 \lambda_k(\Omega,\widehat g)=c^{-1}\lambda_k(\Omega,g),\qquad
 \Lambda_k(\Omega,\widehat g)=c^{-1}\Lambda_k(\Omega,g),\qquad
 K_{\widehat g}=c^{-1}K_g,
 \label{eq:scaling}
\end{equation}
where we explicitly indicated the dependence of the eigenvalues on the metric and where  by $K_g$ and $K_{\widehat g}$ we denote the sectional curvature on $\Omega$ corresponding to the metrics $g$ and $\widehat g$, respectively.

Finally, we also recall the following continuity result, which will be used later on.

\begin{lemma}\label{lem:continuity}
Suppose that the smooth metrics $g_\varepsilon$, on the closure of a fixed
bounded smooth domain $\Omega$, $C^2$-converge to the smooth metric $g_0$ as $\varepsilon\rightarrow0$. Then, for every $k\in\mathbb{N}$,
\[
 \lambda_k(\Omega,g_\varepsilon)\longrightarrow\lambda_k(\Omega,g_0),\qquad
 \Lambda_k(\Omega,g_\varepsilon)\longrightarrow\Lambda_k(\Omega,g_0),
 \qquad\text{as}~\varepsilon\rightarrow0.
 \]
\end{lemma}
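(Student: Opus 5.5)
The plan is to compare Rayleigh quotients for $g_\varepsilon$ and $g_0$ on the fixed function spaces $H^1_0(\Omega)$ and $H^2_0(\Omega)$, and then pass the comparison through the min--max characterizations \eqref{eq:dirichlet-minmax} and \eqref{eq:buckling-minmax}. The key point is that these spaces, as sets, do not depend on the metric: on the compact set $\overline\Omega$, any two smooth metrics give equivalent Sobolev norms. Write, in local coordinates, $\dd V_g=\sqrt{\det g}\,\dd x$ and $\langle\nabla u,\nabla u\rangle_g=g^{ij}\partial_iu\,\partial_ju$. Then $C^0$-convergence $g_\varepsilon\to g_0$ gives uniform convergence of $g_\varepsilon^{ij}$ and of $\sqrt{\det g_\varepsilon}$. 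Consequently there is $\delta(\varepsilon)\to0$ such that for all $u\in H^1_0(\Omega)$
\[
(1-\delta)\int_\Omega u^2\dd V_{g_0}\le\int_\Omega u^2\dd V_{g_\varepsilon}\le(1+\delta)\int_\Omega u^2\dd V_{g_0},
\]
and the same two-sided bound holds for the Dirichlet energy. The Dirichlet Rayleigh quotients for $g_\varepsilon$ and $g_0$ therefore differ by a factor in $[(1-\delta)/(1+\delta),(1+\delta)/(1-\delta)]$. Inserting this into \eqref{eq:dirichlet-minmax}, with the same test subspaces $E$ for both metrics, gives
\[
\frac{1-\delta}{1+\delta}\,\lambda_k(g_0)\le\lambda_k(g_\varepsilon)\le\frac{1+\delta}{1-\delta}\,\lambda_k(g_0),
\]
and hence $\lambda_k(\Omega,g_\varepsilon)\to\lambda_k(\Omega,g_0)$.

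For the buckling eigenvalues the denominator in \eqref{eq:buckling-minmax} is handled in the same way. The numerator needs more care, since
\[
\Delta_g u=g^{ij}\big(\partial_{ij}u-\Gamma^m_{ij}(g)\,\partial_mu\big)
\]
involves first derivatives of the metric through the Christoffel symbols. By $C^1$-convergence (which is implied by $C^2$), we have the pointwise bound
\[
|\Delta_{g_\varepsilon}u-\Delta_{g_0}u|\le\eta(\varepsilon)\big(|\partial^2u|+|\partial u|\big),\qquad \eta(\varepsilon)\to0,
\]
uniformly on $\overline\Omega$; this requires covering $\overline\Omega$ by finitely many charts with a partition of unity.

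The main obstacle is turning this pointwise bound into a relative bound for the Laplacian energy, that is, showing
\[
\|\Delta_{g_\varepsilon}u-\Delta_{g_0}u\|_{L^2}\le C\eta\,\|\Delta_{g_0}u\|_{L^2}\qquad\text{for all }u\in H^2_0(\Omega).
\]
For this I would invoke elliptic $H^2$-regularity for the Dirichlet problem on the smooth bounded domain $\Omega$: for $u\in H^2_0\subset H^2\cap H^1_0$ one has $\|u\|_{H^2}\le C\|\Delta_{g_0}u\|_{L^2}$. The lower-order term is harmless, because Poincaré's inequality bounds it as well. With this estimate the Laplacian energy satisfies
\[
\int_\Omega(\Delta_{g_\varepsilon}u)^2\dd V_{g_\varepsilon}=(1+O(\eta+\delta))\int_\Omega(\Delta_{g_0}u)^2\dd V_{g_0},
\]
uniformly in $u\in H^2_0(\Omega)$. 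The buckling Rayleigh quotients then differ by a multiplicative factor tending to $1$, and the min--max argument used for $\lambda_k$ gives $\Lambda_k(\Omega,g_\varepsilon)\to\Lambda_k(\Omega,g_0)$.

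Only $C^1$-convergence of the metrics is actually used. The $C^2$ hypothesis is more than enough, and it also guarantees uniformity of the elliptic constants if one prefers to use the regularity estimate for $g_\varepsilon$ instead of $g_0$.
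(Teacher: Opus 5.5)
Your proposal is correct and follows essentially the same route as the paper. Both arguments use uniform convergence of the coefficients to compare the three quadratic forms on the fixed spaces $H^1_0(\Omega)$ and $H^2_0(\Omega)$, make the Laplacian-energy bound relative via the Dirichlet elliptic $H^2$ estimate and Poincar\'e's inequality, and conclude with the min--max principles. Your side remark that $C^1$-convergence of the metrics already suffices is also accurate.
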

\begin{proof}
	The Sobolev spaces for metrics $g_\varepsilon$ and $g_0$ are topologically equivalent. We use the same notation for these spaces  $L^2(\Omega)$, $H^1(\Omega)$ and $H^2(\Omega)$ and the respective norms $\|u\|_{L^2(\Omega)}^2 := \int_\Omega|u|^2\dd V_{g_0} $, $\|u\|_{H^1(\Omega)} ^2:= \||\nabla u|\|^2_{L^2(\Omega)}+ \|u\|^2_{L^2(\Omega)}$, and $\|u\|_{H^2(\Omega)}^2 := \||D^2u|\|^2_{L^2(\Omega)}+ \|u\|^2_{H^1(\Omega)}$ will be for convenience evaluated in the limiting metric $g_0$.
	We introduce the symmetric bilinear forms
	\[
	\begin{aligned}
		\mathfrak{d}_g[u,v]&=\int_\Omega\langle\nabla u,\nabla v\rangle_g\dd V_g,&\qquad\mathrm{dom}\,\mathfrak{d}_g = H^1_0(\Omega),\\
		\mathfrak{a}_g[u,v]&=\int_\Omega(\Delta_g u)(\Delta_g v)\dd V_g,&\qquad
		\mathrm{dom}\,\mathfrak{a}_g = H^2_0(\Omega),\\
		\mathfrak{b}_g[u,v]&=\int_\Omega uv\dd V_g,&\qquad
		\mathrm{dom}\,\mathfrak{b}_g = L^2(\Omega).
	\end{aligned}
	\]
	The Laplace-Beltrami operator in local coordinates acts as
	\[
	\Delta_g u=|g|^{-1/2}\partial_i
	\bigl(|g|^{1/2}g^{ij}\partial_j u\bigr).
	\]
	Uniform convergence of the coefficients in the expression for the Laplace-Beltrami operator
	implies that there exists $\delta_\varepsilon>0$ satisfying $\delta_\varepsilon\rightarrow0$ as $\varepsilon\rightarrow 0$ such that
	\[
	\begin{aligned}
		|\mathfrak{a}_{g_\varepsilon}[u]-\mathfrak{a}_{g_0}[u]|
		&\leq\delta_\varepsilon\|u\|_{H^2(\Omega)}^2,\qquad &\text{for all}\, u\in H^2_0(\Omega),\\
		|\mathfrak{d}_{g_\varepsilon}[u]-\mathfrak{d}_{g_0}[u]|
		&\leq\delta_\varepsilon\|u\|_{H^1(\Omega)}^2,\qquad &\text{for all}\, u\in H^1_0(\Omega),\\
		|\mathfrak{b}_{g_\varepsilon}[u]-\mathfrak{b}_{g_0}[u]|
		&\leq\delta_\varepsilon\|u\|_{L^2(\Omega)}^2,&\qquad\text{for all}\, u\in L^2(\Omega).
	\end{aligned} 
	\]
	The Dirichlet elliptic estimate and Poincar\'e inequality yield
	that there exists a constant $C \ge 1$ such that
	$\|u\|_{H^2(\Omega)}^2\leq C \mathfrak{a}_{g_0}[u]$ for all $u\in H^2_0(\Omega)$; $\|u\|_{H^1(\Omega)}^2\le C\mathfrak{d}_{g_0}[u]$ for all $u\in H^1_0(\Omega)$. Thus,  setting
	$\hat\delta_\varepsilon = C\delta_\varepsilon$, we get
	\[
	\begin{split}
		(1-\hat\delta_\varepsilon)\mathfrak{a}_{g_0}
		&\leq \mathfrak{a}_{g_\varepsilon}\leq(1+\hat\delta_\varepsilon)\mathfrak{a}_{g_0},\\
		(1-\hat\delta_\varepsilon)\mathfrak{d}_{g_0}
		&\leq \mathfrak{d}_{g_\varepsilon}\leq(1+\hat\delta_\varepsilon)\mathfrak{d}_{g_0},\\
		(1-\hat\delta_\varepsilon)\mathfrak{b}_{g_0}
		&\leq \mathfrak{b}_{g_\varepsilon}\leq(1+\hat\delta_\varepsilon)\mathfrak{b}_{g_0},
	\end{split}
	\]
	as quadratic forms. The two
	min-max principles~\eqref{eq:dirichlet-minmax} and~\eqref{eq:buckling-minmax} combined with the above estimates bound each perturbed eigenvalue corresponding to the metric $g_\varepsilon$ between factors
	tending to one times its unperturbed value corresponding to the metric $g_0$, from which the desired continuity of the eigenvalues follows.
\end{proof}


\section{Counterexamples}\label{sec:counterexamples}
In this section, we will present several counterexamples to the validity of Payne's inequality on manifolds with boundary. In particular, we will demonstrate in Subsection~\ref{ssec:cyl} that the inequality can fail on a flat cylindrical surface, in Subsection~\ref{ssec:hcyl} we will show that the inequality can also fail on a cylindrical surface with arbitrary constant negative Gaussian curvature and, finally, in Subsection~\ref{ssec:hemisphere} we will show that it also fails on a round hemisphere of any dimension. 
\subsection{Flat cylindrical surface}\label{ssec:cyl}
Let $\rho > 0$ and let $\Sphere^1_\rho$ be the circle with metric $\rho^2\dd\theta^2$,
where $0\leq\theta<2\pi$. The points on this circle can be identified with the points in the interval $[0,2\pi)$.
Define the cylindrical surface of height $1$ with the base $\Sphere^1_\rho$ as
\[
 C_\rho=(0,1)\times\Sphere^1_\rho
\]
equipped with the metric
\[
 g=\dd x^2+\rho^2\dd\theta^2.
\]
Note that the boundary components of $C_\rho$ are smooth circles.

\begin{theorem}\label{thm:flat-cylinder}
If $\rho\in(0,\frac{1}{2\sqrt{2}\pi})$, then
\[
 \Lambda_2(C_\rho)=4z_1^2<9\pi^2=\lambda_3(C_\rho),
\]
where $z_1\in(\pi,3\pi/2)$ is the first positive nonzero root of the transcendental equation
$\tan z=z$. 
\end{theorem}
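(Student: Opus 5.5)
Both problems separate variables on $C_\rho$: I will expand in the Fourier modes $e^{in\theta}$ on $\Sphere^1_\rho$ and reduce each to a family of one-dimensional problems on $(0,1)$. Since the metric is a product, $-\Delta_g = -\partial_x^2 - \rho^{-2}\partial_\theta^2$. Writing $u=\sum_n u_n(x)e^{in\theta}$ (or the real cosine/sine pairs), both $\mathfrak d_g$ and $\mathfrak a_g$ decouple over $n$. The $n$-th mode then carries the operator $-\partial_x^2+n^2/\rho^2$ on $(0,1)$, with Dirichlet conditions for $\lambda$ and clamped conditions for $\Lambda$.

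\emph{Dirichlet side.} The eigenvalues are $\pi^2 m^2 + n^2/\rho^2$ with $m\ge1$ and $n\in\mathbb Z$. The $n=0$ modes give $\pi^2,4\pi^2,9\pi^2,\dots$. Every mode with $n\neq0$ has eigenvalue at least $\pi^2+\rho^{-2}$. The hypothesis $\rho<\frac{1}{2\sqrt2\pi}$ gives $\rho^{-2}>8\pi^2$, so this lower bound exceeds $9\pi^2$. Hence $\lambda_1=\pi^2$, $\lambda_2=4\pi^2$ and $\lambda_3=9\pi^2$.

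\emph{Buckling side, mode $n=0$.} Here we recover the one-dimensional clamped buckling problem $u''''=-\Lambda u''$ with $u=u'=0$ at $x=0,1$. Its spectrum is classical and I will recompute it by symmetry about $x=1/2$.
\begin{itemize}
\item Even modes $u=\cos(\sqrt\Lambda(x-\tfrac12))-\cos(\sqrt\Lambda/2)$ require $\sin(\sqrt\Lambda/2)=0$. This gives $\Lambda=4\pi^2m^2$, so the first value is $4\pi^2$.
\item Odd modes $u=\sin(\sqrt\Lambda(x-\tfrac12))-2(x-\tfrac12)\sin(\sqrt\Lambda/2)$ require $\tan(\sqrt\Lambda/2)=\sqrt\Lambda/2$. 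This gives $\Lambda=4z_j^2$, with first value $4z_1^2\approx 80.76$.
\end{itemize}
Since $4\pi^2<4z_1^2<16\pi^2$, the first two $n=0$ buckling eigenvalues are $4\pi^2$ and $4z_1^2$. Also $z_1<3\pi/2$ gives $4z_1^2<9\pi^2$, which is the claimed inequality.

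\emph{Buckling side, modes $n\neq0$ (main obstacle).} It remains to show that every such mode has buckling eigenvalue at least $4z_1^2$. Then $\Lambda_2=4z_1^2$ holds exactly, since $\Lambda_2$ is the second value of the union of all mode spectra. Set $a=|n|/\rho$. The mode-$n$ Rayleigh quotient is
\[
\frac{\int_0^1 (u''-a^2u)^2\dd x}{\int_0^1 (u'^2+a^2u^2)\dd x}=\frac{\int_0^1 \big(u''^2+2a^2u'^2+a^4u^2\big)\dd x}{\int_0^1 (u'^2+a^2u^2)\dd x}.
\]
The identity uses $u\in H^2_0(0,1)$ to integrate the cross term by parts.

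To bound it, write $A=\int u'^2$ and $B=\int u^2$. One-dimensional inequalities give $\int u''^2\ge 4\pi^2A$ (this is $\Lambda_1$ of the $n=0$ problem) and $A\ge \pi^2 B$. The numerator is then at least $(4\pi^2+2a^2)A+a^4B$, so the quotient is at least $\min\{4\pi^2+2a^2,\,a^2\}$ by a mediant argument. I expect this to hold even though $a^4B/(a^2B)=a^2$ is the smaller ratio when $a$ is large.

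Since $a^2\ge\rho^{-2}>8\pi^2\approx 78.96$, this bound already exceeds $4\pi^2$. It falls just short of $4z_1^2\approx80.76$, so the crude mediant may not suffice. To sharpen it, I will apply the mediant to the split $(4\pi^2+a^2)A+a^2(A+a^2B)$ instead. This yields the lower bound $\min\{(4\pi^2+a^2)/1,\ a^2\}$, applied to the part $A$ and to $A+a^2B$ respectively, and it is weighted by the denominator $A+a^2B$.

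More cleanly, the numerator is at least $4\pi^2A+a^2(A+a^2B)+a^2A\ge a^2(A+a^2B)$, giving quotient $\ge a^2+\frac{(4\pi^2+a^2)A}{A+a^2B}$. Using $A\ge\pi^2B$, the fraction is at least $\frac{(4\pi^2+a^2)\pi^2}{\pi^2+a^2}>\pi^2$. So the quotient exceeds $a^2+\pi^2>9\pi^2>4z_1^2$, and all $n\neq0$ modes lie above $4z_1^2$.
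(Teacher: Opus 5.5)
Your proposal is correct and follows essentially the paper's route. You separate variables, read off $\lambda_3=9\pi^2$ from the zero angular mode, and solve the one-dimensional clamped problem for that mode; your even/odd split about $x=1/2$ reproduces exactly the two factors of the paper's determinant $4\sin(q/2)[\sin(q/2)-(q/2)\cos(q/2)]$. You then bound all nonzero angular modes above $9\pi^2$.

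The only difference is in that last step. The paper reads off the bound $\pi^2+\rho^{-2}$ directly from the Dirichlet sine expansion. Your final estimate instead reaches $a^2+\pi^2$ from $\int u''^2\ge 4\pi^2\int u'^2$ and Poincar\'e. Your first mediant attempt gives only $a^2>8\pi^2$, which is not enough, and can simply be discarded.
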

\begin{proof}
By separation of variables, the Dirichlet eigenvalues of the Laplacian on $C_\rho$ can be listed as
\begin{equation}
 \lambda_{n,m}=\pi^2n^2+\frac{m^2}{\rho^2},
 \qquad n\in\mathbb{N},\quad m\in\mathbb Z.
 \label{eq:cylinder-dirichlet}
\end{equation}
Clearly, the total sequence of Dirichlet eigenvalues of the Laplacian on $C_\rho$ is the total enumeration of all the above values, where repetitions account for multiplicities.
The assumption on $\rho$ places the first non-constant angular level ($|m| = 1$)
strictly above $9\pi^2$. Thus the first three eigenvalues of the Dirichlet Laplacian
on $C_\rho$ are simple and given by $\pi^2$, $4\pi^2$, and $9\pi^2$ (all of them correspond to $m = 0$). In particular, $\lambda_3(C_\rho) = 9\pi^2$.

Again applying the separation of variables, we observe that the buckling eigenvalues on $C_\rho$ can be listed as
\begin{equation}
 \Lambda_{n,m}=\min_{\substack{E\subset H^2_0(0,1)\\ \dim E=n}}
 \max_{0\neq f\in E}\frac{\displaystyle\int_0^1\left(-f'' + \tfrac{m^2}{\rho^2}f\right)^2\dd x}{\displaystyle\int_0^1 \left((f')^2+\tfrac{m^2}{\rho^2}f^2\right)\dd x},
 \qquad n\in\mathbb{N},\quad m\in\mathbb Z.
 \label{eq:cylinder-dirichlet}
\end{equation}
The total family of buckling eigenvalues on $C_\rho$ is the enumeration of all the above values, where repetitions account for multiplicities.
The values $\Lambda_{n,0}$ for all $n\in\mathbb{N}$
can be found as values $q^2$ for $q > 0$ for which the following system
\[
\begin{cases}
    f^{(4)}+ q^2 f'' = 0,\qquad\qquad\qquad\qquad \text{in}\,\,(0,1),\\
    f(0)=f(1) =f'(0)=f'(1)=0,
\end{cases}
\]
has a non-trivial solution in $C^\infty([0,1])$. The existence of several linearly independent solutions for a given value $q^2$ accounts for the multiplicity of the respective eigenvalue.

The general solution of $f^{(4)}+q^2f''=0$ is
\[
 f(x)=a+bx+c\cos(qx)+d\sin(qx),
\]
with constants $a,b,c,d\in\mathbb{R}$.
The conditions at $x=0$ give $a=-c$ and $b=-qd$.
The remaining equations are
\[
 \begin{pmatrix}\cos q-1&\sin q-q\\-\sin q&\cos q-1\end{pmatrix}
 \begin{pmatrix}c\\d\end{pmatrix}=0.
\]
The corresponding determinant equals
\begin{equation}
 2(1-\cos q)-q\sin q
 =4\sin(q/2)\bigl[\sin(q/2)-(q/2)\cos(q/2)\bigr].
 \label{eq:cylinder-determinant}
\end{equation}
From the construction we notice that all the eigenvalues are simple.
The apparent root $q=0$ gives no nonzero clamped eigenfunction and
is excluded. The positive roots are $q=2k\pi$, $k\in\mathbb{N}$ and $q=2z_k$,
where $z_k\in(\pi k,\pi k+\pi/2)$, $k\in\mathbb{N}$ solves the transcendental equation $\tan z=z$.
On the intervals $(\pi k,\pi k+\pi/2)$ the function $\tan z-z$ increases from a negative number to
$+\infty$, with derivative $\tan^2z$; there are no other positive
roots. Therefore we get that
$\Lambda_{1,0} = 4\pi^2$ and $\Lambda_{2,0} = 4z_1^2$. Moreover, we notice that $4z_1^2<9\pi^2$.

Let $m\in\mathbb{Z}\setminus\{0\}$. Let us expand $f\in H^2_0(0,1)\setminus\{0\}$ into the series
$f(x) = \sum_{k=1}^\infty c_k\sqrt{2}\sin(\pi k x)$ with some real coefficients $\{c_k\}_{k\in\mathbb{N}}$.
This is a series in terms of normalized eigenfunctions of the Dirichlet Laplacian on the interval $(0,1)$. Since $f$ is a non-zero function, we can exclude the case when all coefficients $c_k$ are equal to zero. Then we obtain that
\[
 \frac{\displaystyle \int_0^1\left(-f''+\tfrac{m^2}{\rho^2}f\right)^2\dd x}{\displaystyle\int_0^1 \left((f')^2+\tfrac{m^2}{\rho^2}f^2\right)\dd x}=
 \frac{\displaystyle\sum_{k=1}^\infty\left(\pi^2k^2+\tfrac{m^2}{\rho^2}\right)^2|c_k|^2}
        {\displaystyle\sum_{k=1}^\infty\left(\pi^2k^2+\tfrac{m^2}{\rho^2}\right)|c_k|^2}
 \geq\pi^2+\frac{1}{\rho^2}>9\pi^2.
\]
Hence, we conclude from~\eqref{eq:cylinder-dirichlet} that $\Lambda_{1,m} > 9\pi^2$ for all $m\in\mathbb{Z}\setminus\{0\}$. Thus, $\Lambda_2(C_\rho) = 4z_1^2$ and the claim follows
from the observation that $4z_1^2 < 9\pi^2 = \lambda_3(C_\rho)$.
\end{proof}

We observe that Theorem \ref{thm:flat-cylinder} is a counterexample to \eqref{eq:GP} for $k=2$ and therefore does not violate the results in \cite{LevitinPolterovich}, where inequality \eqref{eq:GP} is shown for all flat two-dimensional cylindrical surfaces for all $k$ odd. In fact, we would like to highlight that it is possible to construct cylindrical surfaces such that inequality \eqref{eq:GP} is violated for any given $k$ even \cite{LEVpc}.

\subsection{Hyperbolic cylindrical surface}\label{ssec:hcyl}
The aim of this subsection is to show that by deformation of the previous counterexample, one can construct domains in two-dimensional manifolds with arbitrary negative Gaussian curvature, for which Payne's inequality fails.
\begin{theorem}\label{thm:hyperbolic-cylinder}
For every $\kappa<0$, there is a bounded smooth domain $\Omega$  in a two-dimensional manifold of constant Gaussian curvature $\kappa$  for which
$\Lambda_2(\Omega)<\lambda_3(\Omega)$.
\end{theorem}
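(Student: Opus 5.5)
We deform the flat counterexample of Theorem~\ref{thm:flat-cylinder} into a hyperbolic one and then rescale. Fix $\rho\in(0,\frac{1}{2\sqrt2\pi})$, so that $\Lambda_2(C_\rho)=4z_1^2<9\pi^2=\lambda_3(C_\rho)$ for the flat metric $g_0=\dd x^2+\rho^2\dd\theta^2$ on $\overline{C_\rho}=[0,1]\times\Sphere^1_\rho$. For $\varepsilon>0$ we consider the warped product metric
\[
 g_\varepsilon=\dd x^2+\rho^2\cosh^2(\varepsilon x)\,\dd\theta^2
\]
on the same domain. This is a portion of the standard hyperbolic cylinder (a quotient of $\dH^2$ by a hyperbolic translation), so it sits inside a complete surface. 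For a metric $\dd x^2+f(x)^2\dd\theta^2$ the Gaussian curvature is $-f''/f$. Here this gives $K_{g_\varepsilon}\equiv-\varepsilon^2$. The boundary circles $\{x=0\}$ and $\{x=1\}$ are smooth, so $C_\rho$ is a bounded smooth domain in a surface of constant curvature $-\varepsilon^2$.

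The key steps are as follows.
\begin{enumerate}
\item Check that $g_\varepsilon\to g_0$ in $C^2(\overline{C_\rho})$. The only nonconstant coefficient is $\rho^2\cosh^2(\varepsilon x)$, and it converges to $\rho^2$ in $C^k([0,1])$ for every $k$.
\item Apply Lemma~\ref{lem:continuity} with $k=2$ and $k=3$. This gives $\Lambda_2(C_\rho,g_\varepsilon)\to4z_1^2$ and $\lambda_3(C_\rho,g_\varepsilon)\to9\pi^2$. Since the limiting inequality is strict, there is $\varepsilon_0>0$ with $\Lambda_2(C_\rho,g_{\varepsilon_0})<\lambda_3(C_\rho,g_{\varepsilon_0})$.
\item Given an arbitrary $\kappa<0$, rescale $\widehat g=c\,g_{\varepsilon_0}$ with $c=\varepsilon_0^2/|\kappa|$. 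By \eqref{eq:scaling}, $K_{\widehat g}=c^{-1}(-\varepsilon_0^2)=\kappa$.
\item The same scaling law \eqref{eq:scaling} multiplies both $\Lambda_2$ and $\lambda_3$ by $c^{-1}$, so the strict inequality survives. Hence $\Omega=(C_\rho,\widehat g)$ has the required property.
\end{enumerate}

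There is no genuine obstacle, since Lemma~\ref{lem:continuity} does the analytic work. The only point needing care is to confirm that the warped metric really has constant curvature $-\varepsilon^2$ and sits in a complete ambient surface. The curvature is a one-line computation. Completeness holds because the hyperbolic cylinder $\R\times\Sphere^1$ with metric $\dd x^2+\rho^2\cosh^2(\varepsilon x)\dd\theta^2$, with $x\in\R$, is a complete hyperbolic surface.
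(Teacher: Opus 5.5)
Your proposal is correct and follows the paper's proof: the paper also warps the flat cylinder into $\dd x^2+\rho^2\cosh^2(\cdot)\,\dd\theta^2$ to get constant negative curvature, uses Lemma~\ref{lem:continuity} to keep the strict gap $\Lambda_2<\lambda_3$ for small deformations, and then rescales the metric via \eqref{eq:scaling} to reach any $\kappa<0$. The differences are cosmetic: you use $\cosh(\varepsilon x)$ and the paper uses $\cosh(\sqrt{\varepsilon}x)$; you keep $(0,1)$ and a general admissible $\rho$, while the paper takes $(-1/2,1/2)$ and $\rho=1/10$; and you add an explicit check that the ambient hyperbolic cylinder is complete.
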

\begin{proof}
On the manifold $\R\times\Sphere^1$ we define the metric
\[
 g_\varepsilon=\dd x^2+
       \rho^2\cosh^2(\sqrt{\varepsilon}\,x)\dd\theta^2,\qquad \eps > 0,
 \]
with $\rho  = \frac{1}{10} < \frac{1}{2\sqrt{2}\pi}$.
Consider now the domain $\Omega=(-1/2,1/2)\times\Sphere^1$.

For a metric $\dd x^2+f(x)^2\dd\theta^2$, the Gaussian curvature
is $K(x,\tt) = -f''(x)/f(x)$ (\cf~\cite[\S 4-6]{doCarmo1976}). Here $f''=\varepsilon f$, so $K=-\varepsilon$.
These metrics converge to the flat cylindrical surface metric in $C^\infty$
on $\overline\Omega$ as $\eps\arr0$. The strict gap in
Theorem~\ref{thm:flat-cylinder} and Lemma~\ref{lem:continuity}
imply $\Lambda_2(\Omega, g_\varepsilon)<\lambda_3(\Omega,g_\varepsilon)$
for every sufficiently small $\varepsilon>0$.

Fix one such $\varepsilon$, write $\kappa=-a$ with $a > 0$, and rescale the metric by 
\[
 \widehat g=\frac{\varepsilon}{a}g_\varepsilon.
\]
Equation~\eqref{eq:scaling} gives $K_{\widehat g}=-a=\kappa$,
while multiplying both eigenvalue sequences by $a/\varepsilon$
preserves their strict ordering. 
\end{proof}

We remark that the domains in the above theorem are not isometric to
bounded domains in $\dH^2$. Thus this construction
does not contradict the fact that we could not find numerically 
a domain in $\dH^2$ for which Payne's inequality fails.

\subsection{Round hemispheres}\label{ssec:hemisphere}
In this subsection, we show that Payne's inequality fails on a round hemisphere of any dimension.
We embed $\Sphere^d\subset\R^{d+1}$ and define
$\Sphere^d_+=\{x=(x_1,x_2,\dots,x_{d+1})\in\dS^d\colon x_{d+1}>0\}$. The equator is the smooth boundary of $\dS^d_+$.
The metric on $\Sphere^d_+$ is induced by its natural embedding in $\dR^{d+1}$.

\begin{theorem}\label{thm:hemisphere}
For every $d\geq2$,
\begin{equation}
 \Lambda_{d+1}(\Sphere^d_+)
 \leq Q_d:=\frac{19d^2+56d+48}{7d+8}
 <3(d+2)=\lambda_{d+2}(\Sphere^d_+).
 \label{eq:hemisphere-bound}
\end{equation}
The strictly positive variational margin is
\begin{equation}
 3(d+2)-Q_d=\frac{2d(d+5)}{7d+8}.
 \label{eq:hemisphere-margin}
\end{equation}
\end{theorem}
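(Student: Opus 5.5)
We use the standard description of the Dirichlet spectrum of $\Sphere^d_+$, build an explicit $(d+1)$-dimensional subspace of $H^2_0(\Sphere^d_+)$, and insert it into the min--max principle \eqref{eq:buckling-minmax}.

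\emph{Dirichlet side.} Write $t=x_{d+1}$. The Dirichlet eigenfunctions on $\Sphere^d_+$ are the restrictions of spherical harmonics of degree $l$ that are odd in $t$, with eigenvalue $l(l+d-1)$.
\begin{itemize}
\item For $l=1$ there is only $t$, giving eigenvalue $d$ with multiplicity one.
\item For $l=2$ the odd harmonics are $x_it$, $1\le i\le d$, giving eigenvalue $2(d+1)$ with multiplicity $d$.
\item The next eigenvalue is $3(d+2)$, coming from $l=3$.
\end{itemize}
Hence $\lambda_1,\dots,\lambda_{d+1}$ are exhausted by $l\le 2$, and $\lambda_{d+2}(\Sphere^d_+)=3(d+2)$. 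The margin \eqref{eq:hemisphere-margin} is then just the subtraction $3(d+2)(7d+8)-(19d^2+56d+48)=2d(d+5)$.

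\emph{Trial space.} We take the functions
\[
u_0=t^2\,p(t),\qquad u_i=x_i\,t^2,\quad i=1,\dots,d,
\]
with $p$ a low-degree polynomial; the first try is $p\equiv1$, and $p(t)=1+\alpha t$ if needed. All of these vanish to second order on the equator, so they lie in $H^2_0(\Sphere^d_+)$.

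The reflections $x_i\mapsto -x_i$ ($i\le d$) preserve $\Sphere^d_+$ and commute with $\Delta_g$. Under them $u_0$ is even in every $x_i$, while $u_i$ is odd in $x_i$ and even in the other $x_j$. Therefore the $u$'s are mutually orthogonal for both forms $\int(\Delta_g u)(\Delta_g v)\dd V_g$ and $\int\langle\nabla u,\nabla v\rangle_g\dd V_g$. The $u_i$, $i\ge1$, all share the same Rayleigh quotient by rotational symmetry in $(x_1,\dots,x_d)$. Consequently the maximum of the buckling quotient over $E=\Span\{u_0,\dots,u_d\}$ equals $\max\{R(u_0),R(u_1)\}$, and \eqref{eq:buckling-minmax} gives $\Lambda_{d+1}\le\max\{R(u_0),R(u_1)\}$.

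\emph{Computing the quotients.} We use the identity that, for a homogeneous polynomial $P$ of degree $n$ on $\R^{d+1}$, on the sphere
\[
\Delta_{\Sphere^d}P=(\Delta_{\R^{d+1}}P)\big|_{\Sphere^d}-n(n+d-1)P .
\]
With $\Delta_g=\Delta_{\Sphere^d}$ this gives
\[
-\Delta_g t^2=2(d+1)t^2-2,\qquad -\Delta_g(x_1t^2)=3(d+2)x_1t^2-2x_1 .
\]
(For $p=1+\alpha t$ one also needs $-\Delta_g t^3=3(d+2)t^3-6t$.) The denominators are computed as $\int u(-\Delta_g u)\dd V_g$, since $u\in H^2_0$. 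Everything then reduces to the moments
\[
\int_{\Sphere^d_+}t^{m}\dd V_g,\qquad \int_{\Sphere^d_+}x_1^2t^{m}\dd V_g=\frac1d\int_{\Sphere^d_+}(1-t^2)t^{m}\dd V_g .
\]
In the coordinate $t=\cos r$ the first moment is $|\Sphere^{d-1}|\int_0^1 t^m(1-t^2)^{(d-2)/2}\dd t$. Ratios of consecutive moments are simple rational functions of $d$, for instance $\int t^{m+2}/\int t^m=(m+1)/(m+d+1)$. This makes both quotients explicit rational functions of $d$.

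\emph{Expected obstacle.} The main difficulty is to check that the larger of the two quotients is exactly $Q_d$, and that this value lies below $3(d+2)$ for all $d\ge 2$. If $p\equiv1$ does not produce exactly $Q_d$, I would optimize the $2\times2$ generalized eigenproblem in the span of $t^2$ and $t^3$ for the $u_0$-slot. The same could be done with $x_1t^2$ and $x_1t^3$ for the $u_1$-slot; by the same symmetry argument the maximum is still taken slotwise. Any such choice giving a value at most $Q_d$ finishes the proof, since $Q_d<3(d+2)$ has already been established via the margin formula.
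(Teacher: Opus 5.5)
Your plan is the paper's proof: the Dirichlet spectrum comes from odd spherical harmonics, the trial space is $\Span\{t^2,x_1t^2,\dots,x_dt^2\}$, and a reflection/permutation argument reduces the maximum to two quotients. The only differences are cosmetic: you get the Laplacians from the homogeneous-polynomial identity and the moments from a Beta-type recursion, where the paper uses the product rule and a Gaussian-integral moment formula. Finishing your computation with $p\equiv1$ gives $R(t^2)=2(d+1)$ and $R(x_1t^2)=Q_d$, with $Q_d-2(d+1)=\frac{5d^2+26d+32}{7d+8}>0$, so the maximum over the trial space is exactly $Q_d$ and the fallback $p=1+\alpha t$ is never needed.
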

\begin{proof}
We split the argument into two steps. In the first step, we compute $\lm_{d+2}(\dS^d_+)$ and in the second step we estimate $\Lm_{d+1}(\dS^d_+)$ from above.

\noindent {\it Step 1. Dirichlet Laplacian eigenvalues on $\dS^d_+$.}
Odd reflection across the totally geodesic equator extends 
an eigenfunction of the Dirichlet Laplacian on $\dS^d_+$  to an eigenfunction of the Laplacian on $\dS^d$.
Conversely, the restrictions to $\dS^d_+$ of the eigenfunctions of the Laplacian on $\dS^d$ which are odd with respect to reflection $x_{d+1}\mapsto -x_{d+1}$ constitute the complete family of  the eigenfunctions of the Dirichlet Laplacian on $\dS^d_+$.
Below we rely on the analysis of spherical harmonics in~\cite[\S 1.2.3]{LevitinMangoubiPolterovich2023}.
 Degree $\ell\in\dN_0$ spherical harmonics correspond to the eigenvalue
$\ell(\ell+d-1)$ of the Laplacian on $\dS^d$. Using the explicit characterization of spherical harmonics
we can single out those which are odd with respect to reflection $x_{d+1}\mapsto-x_{d+1}$.
We observe that for $\ell = 0$ there are no such harmonics, for $\ell =1$
the basis of such spherical harmonics is $\{x_{d+1}\}$, while for $\ell = 2$ the basis of such spherical harmonics is $\{x_{d+1}x_1,\ldots,x_{d+1}x_d\}$.
There are non-zero odd spherical harmonics corresponding to $\ell=3$; for instance
$x_{d+1}x_1x_2$. 
Hence
\begin{equation}
 \lambda_1(\dS^d_+)=d,\qquad
 \lambda_2(\dS^d_+)=\cdots=\lambda_{d+1}(\dS^d_+)=2(d+1),\qquad
 \lambda_{d+2}(\dS^d_+)=3(d+2).
 \label{eq:hemisphere-dirichlet}
\end{equation}

\medskip

\noindent\emph{Step 2. Trial space for the clamped buckling problem.}
Take the  trial space
\[
 E=\Span\big\{x^2_{d+1},x^2_{d+1}x_1,\ldots,x^2_{d+1}x_d\big\}.
\]
These $d+1$ functions are clearly linearly independent.
Their values and gradients vanish on $\partial\dS^d_+$, so
$E\subset H^2_0(\Sphere^d_+)$. Let us denote in the following by $\Delta$ and $\nb$, respectively, the Laplacian and the gradient on $\dS^d$ and by $\langle\cdot,\cdot\rangle$ the inner product in the tangent space to $\dS^d$ with respect to the round metric.
We have standard identities for $i, j\in\{1,2,\dots,d+1\}$
\[
 \Delta x_j=-dx_j,\qquad
 \langle\nabla x_i,\nabla x_j\rangle=\delta_{ij}-x_ix_j.
\]
The product rule gives
\begin{equation}
	\label{eq:sphere-t2}
\begin{aligned}
 \Delta(x_{d+1}^2)&=2-2(d+1)x^2_{d+1},\\
 \Delta(x_{d+1}^2x_i)&=  x_i[2-2(d+1)x_{d+1}^2]-dx_{d+1}^2x_i+4x_{d+1}\langle\nabla x_{d+1},\nabla x_i\rangle\\
 &=2x_i-3(d+2)x_{d+1}^2x_i,\qquad 1\leq i\leq d.
\end{aligned}
\end{equation}
Reflections $x_i\mapsto-x_i$ show that both bilinear forms
$(\Delta u,\Delta v)_{L^2(\dS^d_+)}$ and $(-\Delta u,v)_{L^2(\dS^d_+)}$ are diagonal in the basis
$\{x_{d+1}^2,x_{d+1}^2x_1,\dots,x_{d+1}^2x_d\}$, that is for all $i,j\in\{1,2,\dots,d\}$ with $i\ne j$
\begin{equation}\label{eq:orth}
	\begin{aligned}
	\big(\Delta(x_{d+1}^2), \Delta(x_{d+1}^2x_i) \big)_{L^2(\dS^d_+)} &= 0,\\
	 \big(\Delta(x_{d+1}^2x_i), \Delta(x_{d+1}^2x_j) \big)_{L^2(\dS^d_+)} &= 0,\\
	 \big(\Delta(x_{d+1}^2x_i), x_{d+1}^2x_j \big)_{L^2(\dS^d_+)}&=0,\\
	 \big(\Delta(x_{d+1}^2), x_{d+1}^2x_j\big)_{L^2(\dS^d_+)} &=0.
\end{aligned}	
\end{equation}
Let us introduce the notation for the clamped buckling Rayleigh quotient
\[
	\sfR[u] := \frac{\displaystyle \int_{\dS^d_+} (\Delta u)^2\dd V}{\displaystyle\int_{\dS^d_+} (-\Delta u)u\dd V },\qquad u\in H^2_0(\dS^d_+)\sm\{0\}.
\]
This Rayleigh quotient is deduced from~\eqref{eq:buckling-minmax} upon integration by parts in the denominator.
Permutations among the first $d$ coordinates imply that the Rayleigh quotients evaluated on the last
$d$ functions $\{x_{d+1}^2x_j\}_{j=1}^d$ are
equal; \ie
\[
	\sfR[x_{d+1}^2x_i]= \sfR[x_{d+1}^2x_j],\qquad i,j\in\{1,2,\dots,d\}.
\]
We use the mapping $\sfE\colon L^1(\dS^d)\arr \dR$ defined by
\[
	\sfE[u] := \frac{\displaystyle \int_{\dS^d}u\dd V}{|\dS^d|}.
\]
We stress that every integrand below is even in $x_{d+1}$, and its
hemisphere mean equals its full-sphere mean.
For nonnegative integers $\{a_j\}_{j=1}^{d+1}$, we find
\begin{equation}
\begin{aligned}
 \sfE\left[\prod_{j=1}^{d+1}x_j^{2a_j}\right] &= \frac{\displaystyle \int_{\dR^{d+1}} e^{-|x|^2} \left(\prod_{j=1}^{d+1}x_j^{2a_j}\right)\dd x}{\displaystyle  \int_{\dR^{d+1}} e^{-|x|^2}|x|^{\sum_{j=1}^{d+1}2a_j} \dd x}\\
 &=\frac{\Gamma((d+1)/2)}{\Gamma((d+1)/2+\sum_{j=1}^{d+1} a_j)}
   \prod_{j=1}^{d+1}\frac{\Gamma(a_j+1/2)}{\Gamma(1/2)}.
\end{aligned}
 \label{eq:sphere-moments}
\end{equation}
In particular, for any $i\in\{1,2,\dots,d\}$
\begin{equation}\label{eq:E}
\begin{gathered}
 \sfE[x_{d+1}^2]=\frac1{d+1},\qquad
 \sfE[x_{d+1}^4]=\frac3{(d+1)(d+3)},\qquad
 \sfE[x_i^2]=\frac1{d+1},\\
 \sfE[x_{d+1}^2x_i^2]=\frac1{(d+1)(d+3)},\qquad
 \sfE[x_{d+1}^4x_i^2]=\frac3{(d+1)(d+3)(d+5)}.
\end{gathered}
\end{equation}
By \eqref{eq:sphere-t2} combined with~\eqref{eq:E} we obtain
\[
\begin{aligned}
 \sfE[x_{d+1}^2(-\Delta(x_{d+1}^2))]&=2(d+1)\sfE[x_{d+1}^4]-2\sfE[x_{d+1}^2]
       =\frac{4d}{(d+1)(d+3)},\\
 \sfE[(\Delta(x_{d+1}^2))^2]&=4(d+1)^2\sfE[x_{d+1}^4]-8(d+1)\sfE[x_{d+1}^2]+4=\frac{8d}{d+3}.
\end{aligned}
\]
Thus, we get
\begin{equation}\label{eq:R1}
	\sfR[x_{d+1}^2] = \frac{\sfE[(\Delta(x_{d+1}^2))^2]}{\sfE[x_{d+1}^2(-\Delta(x_{d+1}^2))]} = 2(d+1).
\end{equation}
%
Analogously, using \eqref{eq:sphere-t2} combined with~\eqref{eq:E} we get for $i\in\{1,2,\dots,d\}$
\[
\begin{aligned}
 \sfE[x_{d+1}^2x_i(-\Delta(x_{d+1}^2x_i))]&=3(d+2)\sfE[x_{d+1}^4x_i^2]-2\sfE[x_{d+1}^2x_i^2]\\
   &=\frac{7d+8}{(d+1)(d+3)(d+5)},\\
 \sfE[(\Delta(x_{d+1}^2x_i))^2]&=9(d+2)^2\sfE[x_{d+1}^4x_i^2]-12(d+2)\sfE[x_{d+1}^2x_i^2]+4\sfE[x_i^2]\\
 &=\frac{19d^2+56d+48}{(d+1)(d+3)(d+5)}.
\end{aligned}
\]
Thus, we obtain that  for all $i\in\{1,2,\dots,d\}$
\begin{equation}\label{eq:R2}
	\sfR[x_{d+1}^2x_i] = \frac{\sfE[(\Delta(x_{d+1}^2x_i))^2]}{\sfE[x_{d+1}^2x_i(-\Delta(x_{d+1}^2x_i))]}
	= \frac{19d^2+56d+48}{7d+8}= Q_d.
\end{equation}
Moreover, we observe that
\[
 Q_d-2(d+1)=\frac{5d^2+26d+32}{7d+8}>0.
\]
We conclude from~\eqref{eq:R1},~\eqref{eq:R2} combined with~\eqref{eq:orth} that
\[
	\max_{u\in E\setminus\{0\}}\sfR[u]=Q_d.
\]
The min-max principle gives the first inequality in \eqref{eq:hemisphere-bound};
subtracting $Q_d$ from $3(d+2)$ gives \eqref{eq:hemisphere-margin}.
\end{proof}
%

\subsection*{Acknowledgements} D.\ Buoso would like to thank M.\ Levitin and I.\ Polterovich for fruitful discussions on this manuscript and their manuscript \cite{LevitinPolterovich}. D.\ Buoso is a member of the Gruppo Nazionale per l'Analisi Matematica, la Probabilit\`a e le loro Applicazioni (GNAMPA) of the Istituto Naziona\-le di Alta Matematica (INdAM), and acknowledges financial support from the project ``Functional and spectral analysis for differential operators'' funded by GNAMPA INdAM.

\subsection*{AI usage disclosure} The authors acknowledge the use of AI for mathematical discussions and editorial assistance. In particular, the AI tool suggested the counterexamples in Section~\ref{sec:counterexamples}. All AI-assisted arguments and computations were checked and substantially rewritten by the authors, who take  full responsibility for their correctness.

\end{document}